\newtheorem{definition}{Definition}[section]
\newtheorem{proposition}[definition]{Proposition}
\newtheorem{remark}[definition]{Remark}
\newtheorem{theorem}[definition]{Theorem}
\newtheorem{example}[definition]{Example}
\def\rawo\lonra{\longrightarrow}
\def\ot{\otimes}
\newcommand{\selabel}[1]{\label{se:#1}}
\newenvironment{proof}{{\it Proof.}}{\hfill $ \square $ \vskip 4mm}
\begin{document}
\title{A new way to iterate Brzezi\'{n}ski crossed products}
\author{Leonard D\u au\c s \thanks {Research supported by the National Research Foundation - United Arab Emirates and by the United Arab Emirates University(NRF-UAEU) under Grant No. 31S076/2013.}\\
Department of Mathematical Sciences, UAE University\\
PO-Box 15551, Al Ain, United Arab Emirates\\
e-mail: leonard.daus@uaeu.ac.ae
\and Florin Panaite\thanks {Work supported by a grant of the Romanian National 
Authority for Scientific Research, CNCS-UEFISCDI, 
project number PN-II-ID-PCE-2011-3-0635,  
contract nr. 253/5.10.2011.}\\
Institute of Mathematics of the
Romanian Academy\\
PO-Box 1-764, RO-014700 Bucharest, Romania\\
 e-mail: Florin.Panaite@imar.ro}
\date{}
\maketitle
\begin{abstract}
If $A\ot _{R, \sigma }V$ and $A\ot _{P, \nu }W$ are two Brzezi\'{n}ski crossed products 
and $Q:W\ot V\rightarrow V\ot W$ is a linear map satisfying certain properties, we 
construct a Brzezi\'{n}ski crossed product $A\ot _{S, \theta }(V\ot W)$. This construction 
contains as a particular case the iterated twisted tensor product of algebras. 
\end{abstract}
\section*{Introduction}
${\;\;\;}$The {\em twisted tensor product} of the associative unital algebras $A$ and $B$ 
is a new associative unital algebra structure built on the linear space $A\ot B$ with the help 
of a linear map $R:B\ot A\rightarrow A\ot B$ called a {\em twisting map}. This construction, 
denoted by $A\ot _RB$, appeared in several contexts and has various applications 
(\cite{Cap}, \cite{VanDaele}). Concrete examples come especially from Hopf algebra theory, 
like for instance the smash product. 

It was proved in \cite{jlpvo} that twisted tensor products of algebras may be iterated. Namely, if 
$A\otimes _{R_1}B$, $B\otimes _{R_2}C$ and $A\otimes _{R_3}C$ are twisted tensor products and 
the twisting maps $R_1$, $R_2$, $R_3$ satisfy the braid relation $(id_A\otimes R_2)\circ 
(R_3\otimes id_B)\circ (id_C\otimes R_1)=(R_1\otimes id_C)\circ (id_B\otimes R_3)\circ 
(R_2\otimes id_A)$, then one can define certain twisted tensor products 
$A\otimes _{T_2}(B\otimes _{R_2}C)$ 
and $(A\otimes _{R_1}B)\otimes _{T_1}C$ that are equal as algebras 
(and this algebra is called the iterated twisted tensor product).

The Brzezi\'{n}ski crossed product, introduced in \cite{brz}, is a common 
generalization of twisted tensor products of algebras and the Hopf crossed product (containing also 
as a particular case the quasi-Hopf smash product introduced in \cite{bpvo}).  If $A$ is an 
associative unital algebra, $V$ is a linear space endowed with a distinguished element $1_V$ and 
$\sigma :V\ot V\rightarrow A\ot V$ and $R:V\ot A\rightarrow A\ot V$ are linear maps 
satisfying certain conditions, then the Brzezi\'{n}ski crossed product is a certain associative 
unital algebra structure on $A\ot V$, denoted by $A\ot _{R, \sigma }V$. 

In \cite{iterated} was proved that  Brzezi\'{n}ski crossed products may be iterated, in the following sense. 
One can define first a ''mirror version'' of the Brzezi\'{n}ski crossed product, denoted by 
$W\overline{\otimes}_{P, \nu }D$ (where $D$ is an associative unital algebra, $W$ is a linear space and 
$P$, $\nu $ are certain linear maps). Examples are twisted tensor products of algebras and the 
quasi-Hopf smash product introduced in \cite{bpv}. Then it was proved that, if 
$W\overline{\otimes}_{P, \nu }D$ and $D\ot _{R, \sigma }V$ are two Brzezi\'{n}ski crossed products 
and $Q:V\ot W\rightarrow W\ot D\ot V$ is a linear map satisfying some conditions, then one can define 
certain Brzezi\'{n}ski crossed products $(W\overline{\otimes}_{P, \nu }D)\ot 
_{\overline{R}, \overline{\sigma }}V$ and $W\overline{\otimes }_{\overline{P}, \overline{\nu }}
(D\ot _{R, \sigma }V)$ that are equal as algebras. Iterated twisted tensor products of algebras 
appear as a particular case of this construction, as well as the so-called quasi-Hopf two-sided smash 
product $A\#H\#B$ from \cite{bpvo}. 

The aim of this paper is to show that Brzezi\'{n}ski crossed products may be iterated in a 
different way, that will also contain as a particular case the iterated twisted tensor product of algebras. 
Namely, we prove that, if $A\ot _{R, \sigma }V$ and $A\ot _{P, \nu }W$ are two Brzezi\'{n}ski 
crossed products and 
$Q:W\ot V\rightarrow V\ot W$ is a linear map satisfying certain properties, then we can define 
two Brzezi\'{n}ski 
crossed products $A\ot _{S, \theta }(V\ot W)$ and 
$(A\ot _{R, \sigma }V)\ot _{T, \eta }W$ that are equal as algebras. 

Our inspiration for looking at this new way of iterating Brzezi\'{n}ski crossed products came 
from the following result in graded ring theory: If $G$ is a group, $R$ is a $G$-graded ring, 
$A$ and $B$ are two finite left $G$-sets, then there exists a ring isomorphism between the 
smash products $R\#(A\times B)$ and $(R\# A)\# B$. This result was obtained in \cite[Corollary 3.2]{leo}, 
and it is useful in the study of the von Neumann regularity of rings of the type $R\# A$, 
cf. \cite{leo}. The smash product $R\# A$ of the $G-$graded ring $R$ by a (finite) left $G-$set $A$ was 
introduced in the paper \cite{nrv} and it is a particular case of a more general construction. If $H$ is a Hopf algebra, $R$ an $H-$comodule algebra and $C$ an $H-$module coalgebra, then we may consider the category $_{R}^{C}\mathcal{M}(H)$ of Doi-Koppinen Hopf modules (i.e. left $R-$modules and left $C-$comodules which satisfy certain compatibility relations). Then, the smash product \mbox{$R\symbol{35} A$} used in \cite{leo} is a particular smash product and it is the first example in the category $_{R}^{C}\mathcal{M}(H)$ (in the case when $H$ is the groupring $k[G]$, $R$ a $G-$graded ring and $C$ the grouplike coalgebra $k[A]$ on a $G-$set $A$).

\section{Preliminaries}\selabel{1}
${\;\;\;\;}$
We work over a commutative field $k$. All algebras, linear spaces
etc. will be over $k$; unadorned $\ot $ means $\ot_k$. By ''algebra'' we 
always mean an associative unital algebra. The multiplication 
of an algebra $A$ is denoted by $\mu _A$ or simply $\mu $ when 
there is no danger of confusion, and we usually denote 
$\mu _A(a\ot a')=aa'$ for all $a, a'\in A$. The unit of an algebra $A$ is 
denoted by $1_A$ or simply $1$ when there is no danger of confusion. 

We recall from \cite{Cap}, \cite{VanDaele} that, given two algebras $A$, $B$ 
and a $k$-linear map $R:B\ot A\rightarrow A\ot B$, with Sweedler-type notation 
$R(b\ot a)=a_R\ot b_R$, for $a\in A$, $b\in B$, satisfying the conditions 
$a_R\otimes 1_R=a\otimes 1$, $1_R\otimes b_R=1\otimes b$, 
$(aa')_R\otimes b_R=a_Ra'_r\otimes (b_R)_r$, 
$a_R\otimes (bb')_R=(a_R)_r\otimes b_rb'_R$, 
for all $a, a'\in A$ and $b, b'\in B$ (where $r$ and $R$ are two different indices), 
if we define on $A\ot B$ a new multiplication, by 
$(a\ot b)(a'\ot b')=aa'_R\ot b_Rb'$, then this multiplication is associative 
with unit $1\ot 1$. In this case, the map $R$ is called 
a {\em twisting map} between $A$ and $B$ and the new algebra 
structure on $A\ot B$ is denoted by $A\ot _RB$ and called the 
{\em twisted tensor product} of $A$ and $B$ afforded by the map $R$. 

We recall from \cite{brz} the construction of  
Brzezi\'{n}ski's crossed product:
\begin{proposition} (\cite{brz}) \label{defbrz}
Let $(A, \mu , 1_A)$ be an (associative unital) algebra and $V$ a 
vector space equipped with a distinguished element $1_V\in V$. Then 
the vector space $A\ot V$ is an associative algebra with unit $1_A\ot 1_V$ 
and whose multiplication has the property that $(a\ot 1_V)(b\ot v)=
ab\ot v$, for all $a, b\in A$ and $v\in V$, if and only if there exist 
linear maps $\sigma :V\ot V\rightarrow A\ot V$ and 
$R:V\ot A\rightarrow A\ot V$  satisfying the following conditions:
\begin{eqnarray}
&&R(1_V\ot a)=a\ot 1_V, \;\;\;R(v\ot 1_A)=1_A\ot v, \;\;\;\forall 
\;a\in A, \;v\in V, \label{brz1} \\
&&\sigma (1_V\ot v)=\sigma (v\ot 1_V)=1_A\ot v, \;\;\;\forall 
\;v\in V, \label{brz2} \\
&&R\circ (id_V\ot \mu )=(\mu \ot id_V)\circ (id_A\ot R)\circ (R\ot id_A), 
\label{brz3} \\
&&(\mu \ot id_V)\circ (id_A\ot \sigma )\circ (R\ot id_V)\circ 
(id_V\ot \sigma ) \nonumber \\
&&\;\;\;\;\;\;\;\;\;\;
=(\mu \ot id_V)\circ (id_A\ot \sigma )\circ (\sigma \ot id_V), \label{brz4} \\
&&(\mu \ot id_V)\circ (id_A\ot \sigma )\circ (R\ot id_V)\circ 
(id_V\ot R ) \nonumber \\
&&\;\;\;\;\;\;\;\;\;\;
=(\mu \ot id_V)\circ (id_A\ot R )\circ (\sigma \ot id_A). \label{brz5} 
\end{eqnarray}
If this is the case, the multiplication of $A\ot V$ is given explicitly by
\begin{eqnarray*} 
&&\mu _{A\ot V}=(\mu _2\ot id_V)\circ (id_A\ot id_A\ot \sigma )\circ 
(id_A\ot R\ot id_V),
\end{eqnarray*}
where $\mu _2=\mu \circ (id_A\ot \mu )=\mu \circ (\mu \ot id_A)$. 
We denote by $A\ot _{R, \sigma }V$ this algebra structure and 
call it the {\em crossed product} (or Brzezi\'{n}ski crossed product) 
afforded by the data $(A, V, R, \sigma )$.  
\end{proposition}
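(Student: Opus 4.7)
The natural route is to prove the equivalence directly, with the reconstruction formulas for $R$ and $\sigma$ forced by the property $(a\ot 1_V)(b\ot v)=ab\ot v$. For the ``only if'' direction, assume that a multiplication on $A\ot V$ exists with the stated properties. I define
\[
R(v\ot a):=(1_A\ot v)(a\ot 1_V),\qquad \sigma(v\ot w):=(1_A\ot v)(1_A\ot w),
\]
and first derive the explicit multiplication formula by writing $a\ot v=(a\ot 1_V)(1_A\ot v)$ and $b\ot w=(b\ot 1_V)(1_A\ot w)$, then reducing $(a\ot v)(b\ot w)$ by two applications of $(a\ot 1_V)(b\ot v)=ab\ot v$ together with one application each of $R$ and $\sigma$. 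Conditions (\ref{brz1}) and (\ref{brz2}) are then immediate from the unitality of $1_A\ot 1_V$ combined with the same factorization identity.

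For the remaining identities, each one is the shadow of an instance of associativity applied to a carefully chosen triple of elementary tensors. Specifically, (\ref{brz3}) comes from associativity of the product $(1_A\ot v)(a\ot 1_V)(a'\ot 1_V)$: expanding the two bracketings and using the factorization formula produces the braid-like identity that characterizes $R$ as being compatible with $\mu_A$. Similarly, (\ref{brz5}) arises from $(1_A\ot v)(a\ot 1_V)(1_A\ot w)$, which couples $R$ and $\sigma$, and (\ref{brz4}) is extracted from $(1_A\ot v)(1_A\ot w)(1_A\ot u)$, which is the pure $\sigma$-cocycle condition. In each case, both bracketings are rewritten in terms of $R$ and $\sigma$, and equality of the resulting outputs for all $v,w,u\in V$ and $a,a'\in A$ is exactly the stated condition.

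For the ``if'' direction, I would define the multiplication on $A\ot V$ by the displayed formula $\mu_{A\ot V}=(\mu_2\ot id_V)\circ (id_A\ot id_A\ot \sigma)\circ (id_A\ot R\ot id_V)$. Unitality of $1_A\ot 1_V$ and the identity $(a\ot 1_V)(b\ot v)=ab\ot v$ are quick consequences of (\ref{brz1}) and (\ref{brz2}). The real content is the verification of associativity: expanding $((a\ot v)(b\ot w))(c\ot u)$ and $(a\ot v)((b\ot w)(c\ot u))$ yields two expressions, each involving several nested applications of $R$, $\sigma$ and $\mu$. My plan is to start from the left-hand side and perform rewrites in the order suggested by (\ref{brz3})--(\ref{brz5}): first use (\ref{brz3}) to shift an $R$ past a product in $A$, then (\ref{brz5}) to commute an $R\circ\sigma$ configuration into a $\sigma\circ R$ configuration, and finally (\ref{brz4}) to replace an iterated $\sigma$ by its associatively grouped form.

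The main obstacle will be the bookkeeping in this associativity verification, where three distinct ``Sweedler indices'' of $R$ and $\sigma$ must thread through several compositions without collision. A convenient way to control this is to record the computation in diagrammatic form so that each of (\ref{brz3})--(\ref{brz5}) is applied at an unambiguous position; once aligned correctly, both sides collapse to the same expression, completing the proof.
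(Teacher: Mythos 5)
Your overall strategy is the right one, and it is essentially the standard argument from \cite{brz}; note that the paper under review does not prove this proposition at all --- it only recalls it with a citation --- so the comparison here is with that standard argument. The reconstruction $R(v\ot a)=(1_A\ot v)(a\ot 1_V)$, $\sigma (v\ot v')=(1_A\ot v)(1_A\ot v')$, the derivation of the product formula from the factorization $a\ot v=(a\ot 1_V)(1_A\ot v)$, the reading of (\ref{brz1}) and (\ref{brz2}) off unitality, and the plan for the converse (define the multiplication by the displayed formula, check unitality and associativity) are all correct in outline, and your triples for (\ref{brz3}) and (\ref{brz4}) are the right ones.

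However, there is a concrete error in the assignment of associativity instances to conditions: the triple you propose for (\ref{brz5}) yields no information. With the product formula $(a\ot v)(b\ot w)=ab_R\sigma _1(v_R,w)\ot \sigma _2(v_R,w)$ already derived, both bracketings of $(1_A\ot v)(a\ot 1_V)(1_A\ot w)$ evaluate to $a_R\sigma _1(v_R,w)\ot \sigma _2(v_R,w)$: for the right bracketing, $(a\ot 1_V)(1_A\ot w)=a\ot w$ is itself an instance of the assumed normalization property, after which one applies the product formula once; for the left bracketing one only needs (\ref{brz1}) and (\ref{brz2}). So this instance of associativity is a tautology, (\ref{brz5}) is never established, and the ``only if'' direction is incomplete. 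The instance that does produce (\ref{brz5}) is $(1_A\ot v)(1_A\ot v')(a\ot 1_V)$, i.e.\ the two $V$-legs adjacent and the $A$-leg last: the left bracketing computes $\sigma $ first and then $R$, giving $\sigma _1(v,v')a_R\ot \sigma _2(v,v')_R$, while the right bracketing computes $R$ first and then $\sigma $, giving $(a_R)_r\sigma _1(v_r,v'_R)\ot \sigma _2(v_r,v'_R)$; their equality is exactly (\ref{brz8}), the Sweedler form of (\ref{brz5}). With this replacement the ``only if'' direction goes through. The ``if'' direction you only sketch; the plan (unitality from (\ref{brz1})--(\ref{brz2}), associativity by rewriting with (\ref{brz3}), (\ref{brz5}), (\ref{brz4})) is the standard one and does work, but as stated it is a statement of intent rather than a proof, so the burden of the index bookkeeping you acknowledge remains to be discharged.
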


If  $A\ot _{R, \sigma }V$ is a crossed product, we introduce the 
following Sweedler-type notation:
\begin{eqnarray*}
&&R:V\ot A\rightarrow A\ot V, \;\;\;R(v\ot a)=a_R\ot v_R, \\
&&\sigma :V\ot V\rightarrow A\ot V, \;\;\;\sigma (v\ot v')=\sigma _1(v, v') 
\ot \sigma _2(v, v'), 
\end{eqnarray*} 
for all $v, v'\in V$ and $a\in A$. With this notation, the multiplication of 
 $A\ot _{R, \sigma }V$ reads
\begin{eqnarray*}
&&(a\ot v)(a'\ot v')=aa'_R\sigma _1(v_R, v')\ot \sigma _2(v_R, v'), \;\;\;
\forall \;a, a'\in A, \;v, v'\in V.
\end{eqnarray*}

A twisted tensor product is a particular case of a crossed product 
(cf. \cite{guccione}), namely, if $A\ot _RB$ is a twisted tensor product of 
algebras then $A\ot _RB=A\ot _{R, \sigma }B$, where 
$\sigma :B\ot B\rightarrow A\ot B$ 
is given by $\sigma (b\ot b')=1_A\ot bb'$, for all $b, b'\in B$. 
\begin{remark}
The conditions (\ref{brz3}),  (\ref{brz4}) and (\ref{brz5}) for $R$, $\sigma $ 
may be written 
in Sweedler-type notation respectively as 
\begin{eqnarray}
&&(aa')_R\ot v_R=a_Ra'_r\ot (v_R)_r, \label{brz6} \\
&&\sigma _1(y, z)_R\sigma _1(x_R, \sigma _2(y, z))\ot 
\sigma _2(x_R, \sigma _2(y, z)) \nonumber \\
&&\;\;\;\;\;\;\;\;\;\;=\sigma _1(x, y)\sigma _1(\sigma _2(x, y), z)\ot 
\sigma _2(\sigma _2(x, y), z), \label{brz7}\\
&&(a_R)_r\sigma _1(v_r, v'_R)\ot \sigma _2(v_r, v'_R)
=\sigma _1(v, v')a_R\ot \sigma _2(v, v')_R, \label{brz8}
\end{eqnarray}
for all $a, a'\in A$ and $x, y, z, v, v'\in V,$ where $r$ is another copy of $R$. 
\end{remark}
\section{The main result and examples}
\setcounter{equation}{0}
\begin{theorem} \label{principala}
Let $A\ot _{R, \sigma }V$ and $A\ot _{P, \nu }W$ be two crossed products and 
$Q:W\ot V\rightarrow V\ot W$ a linear map, with notation $Q(w\ot v)=
v_Q\ot w_Q$, for all $v\in V$ and $w\in W$. Assume that the following 
conditions are satisfied: \\[2mm]
(i) $Q$ is unital, in the sense that 
\begin{eqnarray}
&&Q(1_W\ot v)=v\ot 1_W, \;\;\; Q(w\ot 1_V)=1_V\ot w, \;\;\; \forall \;
v\in V, \; w\in W. \label{unitQ}
\end{eqnarray}
(ii) the braid relation for $R$, $P$, $Q$, i.e. 
\begin{eqnarray}
&&(id_A\ot Q)\circ (P\ot id_V)\circ (id_W\ot R)=
(R\ot id_W)\circ (id_V\ot P)\circ (Q\ot id_A), 
\end{eqnarray}
or, equivalently, 
\begin{eqnarray}
&&(a_R)_P\ot (v_R)_Q\ot (w_P)_Q=(a_P)_R\ot (v_Q)_R\ot (w_Q)_P, 
\;\;\; \forall \;a\in A, \; v\in V, \; w\in W. \label{braidcoord}
\end{eqnarray}
(iii) we have the following hexagonal relation between $\sigma $, $P$, $Q$:
\begin{eqnarray}
&&(id_A\ot Q)\circ (P\ot id_V)\circ (id_W\ot \sigma )=
(\sigma \ot id_W)\circ (id_V\ot Q)\circ (Q\ot id_V), 
\end{eqnarray}
or, equivalently, 
\begin{eqnarray}
&&\sigma _1(v, v')_P\ot \sigma _2(v, v')_Q\ot (w_P)_Q=
\sigma _1(v_Q, v'_q)\ot \sigma _2(v_Q, v'_q)\ot (w_Q)_q, \label{PQsigma}
\end{eqnarray}
for all $v, v'\in V$ and $w\in W$, where $q$ is another copy of $Q$. \\[2mm]
(iv) we have the following hexagonal relation between $\nu $, $R$, $Q$:
\begin{eqnarray}
&&(id_A\ot Q)\circ (\nu \ot id_V)=(R\ot id_W)\circ (id_V\ot \nu )\circ 
(Q\ot id_W)\circ (id_W\ot Q), 
\end{eqnarray}
or, equivalently, 
\begin{eqnarray}
&&\nu _1(w, w')\ot v_Q\ot \nu _2(w, w')_Q=\nu _1(w_q, w'_Q)_R\ot 
((v_Q)_q)_R\ot \nu _2(w_q, w'_Q), \label{RQniu}
\end{eqnarray}
for all $v\in V$ and $w, w'\in W$, where $q$ is another copy of $Q$. 

Define the linear maps 
\begin{eqnarray*}
&&S:(V\ot W)\ot A\rightarrow A\ot (V\ot W), \;\;\;
S:=(R\ot id_W)\circ (id_V\ot P), \\
&&\theta :(V\ot W)\ot (V\ot W)\rightarrow A\ot (V\ot W), \\
&&\theta :=(\mu _A\ot id_V\ot id_W)\circ (id_A\ot R\ot id_W)
\circ (\sigma \ot \nu )\circ (id_V\ot Q\ot id_W), \\
&&T:W\ot (A\ot V)\rightarrow (A\ot V)\ot W, \;\;\;
T:=(id_A\ot Q)\circ (P\ot id_V), \\
&&\eta :W\ot W\rightarrow (A\ot V)\ot W, \\
&&\eta (w\ot w')=(\nu _1(w, w')\ot 1_V)\ot \nu _2(w, w'), \;\;\; \forall \; w, w'\in W.
\end{eqnarray*}
Then we have a crossed product $A\ot _{S, \theta }(V\ot W)$ (with respect 
to $1_{V\ot W}:=1_V\ot 1_W$), we have a crossed product 
$(A\ot _{R, \sigma }V)\ot _{T, \eta }W$ and we have an algebra isomorphism 
$A\ot _{S, \theta }(V\ot W)\simeq (A\ot _{R, \sigma }V)\ot _{T, \eta }W$ 
given by the trivial identification.
\end{theorem}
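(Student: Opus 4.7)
The idea is that both sides define the same explicit multiplication on the common underlying space $A\ot V\ot W$, reducing the proof essentially to checking the axioms of Proposition~\ref{defbrz} once, together with the direct verification that the two multiplication formulas agree.

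First I would expand
\[
\mu _{A\ot _{S,\theta }(V\ot W)} = (\mu _2\ot id_V\ot id_W)\circ (id_A\ot id_A\ot \theta )\circ (id_A\ot S\ot id_V\ot id_W)
\]
using the given $S$ and $\theta $, to obtain the explicit product
\[
(a\ot v\ot w)(a'\ot v'\ot w') = a(a'_P)_R\,\sigma _1(v_R,v'_Q)\,\nu _1((w_P)_Q,w')_r\ot \sigma _2(v_R,v'_Q)_r\ot \nu _2((w_P)_Q,w'),
\]
where $r$ denotes a second copy of $R$. Performing the analogous expansion for $(A\ot _{R,\sigma }V)\ot _{T,\eta }W$, using $\eta (w\ot w')=(\nu _1(w,w')\ot 1_V)\ot \nu _2(w,w')$ together with the relation $\sigma (v\ot 1_V)=1_A\ot v$ to collapse the inner $\sigma $ coming from the multiplication of $A\ot _{R,\sigma }V$, yields exactly the same formula on $(A\ot V)\ot W$. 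Hence the trivial identification $A\ot _{S,\theta }(V\ot W)\simeq (A\ot _{R,\sigma }V)\ot _{T,\eta }W$ will be an algebra map as soon as both structures are known to be associative unital algebras with common unit $1_A\ot 1_V\ot 1_W$.

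With this common formula in hand, the proof reduces to verifying the five axioms of Proposition~\ref{defbrz} for the explicit data $(S,\theta )$ on the pair $(A,V\ot W)$. The unit axioms (\ref{brz1})--(\ref{brz2}) for $S$ and $\theta $ follow directly from the unit conditions on $R$, $P$, $Q$, $\sigma $, $\nu $ and from (\ref{unitQ}). Axiom (\ref{brz3}) for $S$ is a short calculation using (\ref{brz6}) applied to $R$ and to $P$ independently, without invoking the braid relation. The substantive content lies in axioms (\ref{brz4}) and (\ref{brz5}): for each I would unfold the definition of $\theta $ and then re-order the resulting string of crossings by applying, in turn, (\ref{brz7}) for $\sigma $ and for $\nu $ to generate the iterated triples $\sigma (\sigma ,-)$ and $\nu (\nu ,-)$, (\ref{brz8}) for the pairs $(R,\sigma )$ and $(P,\nu )$ to commute a twisting past a cocycle, the braid relation (\ref{braidcoord}) to swap an $R$ past a $P$ across $Q$, and the hexagonal relations (\ref{PQsigma}) and (\ref{RQniu}) to absorb a factor of $Q$ into $\sigma $ or into $\nu $ at the appropriate moment.

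Once this produces the crossed product $A\ot _{S,\theta }(V\ot W)$, associativity transfers through the trivial identification to the product on $(A\ot V)\ot W$. The absorption property $((a\ot v)\ot 1_W)\cdot ((a'\ot v')\ot w') = ((a\ot v)(a'\ot v'))\ot w'$ is then immediate from the explicit formula (upon setting $w=1_W$ and using the unitality of $Q$, $\nu $ and $\sigma $), so Proposition~\ref{defbrz} applied to the pair $(A\ot _{R,\sigma }V,W)$ provides canonical maps which a short computation identifies with the given $T$ and $\eta $. I expect the main obstacle to be the bookkeeping in the cocycle identity (\ref{brz7}) for $\theta $: the iterated term $\theta (\theta (-\ot -)\ot -)$ forces $\sigma $ and $\nu $ to interact through several nested applications of $R$, $P$, $Q$, and the order in which the two hexagons and the braid relation must be applied has to be chosen with some care so that the strings of Sweedler indices on the two sides of (\ref{brz7}) match up exactly.
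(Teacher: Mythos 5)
Your proposal is correct, and its core coincides with the paper's proof: your explicit formula for the multiplication of $A\ot _{S, \theta }(V\ot W)$ is exactly the one in the paper, your comparison with the multiplication of $(A\ot _{R, \sigma }V)\ot _{T, \eta }W$ (collapsing the inner $\sigma $ via (\ref{brz2}), then applying (\ref{brz6}) and (\ref{brz8})) is the paper's own computation, and your plan for verifying the axioms of $(S,\theta )$ invokes precisely the relations the paper uses ((\ref{brz6}) alone for (\ref{brz3}); the braid relation (\ref{braidcoord}), the hexagons (\ref{PQsigma}), (\ref{RQniu}) and (\ref{brz6})--(\ref{brz8}) for (\ref{brz4}) and (\ref{brz5})). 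Where you genuinely depart from the paper is in establishing that $(A\ot _{R, \sigma }V)\ot _{T, \eta }W$ is a crossed product: the paper does this by a second direct verification of the axioms for $(T,\eta )$, left to the reader as ``a similar computation'', whereas you transfer associativity of the common multiplication from the already-proved structure $A\ot _{S, \theta }(V\ot W)$ and then invoke the converse (``only if'') direction of Proposition \ref{defbrz} for the pair $(A\ot _{R, \sigma }V, W)$, using the absorption property $((a\ot v)\ot 1_W)((a'\ot v')\ot w')=(a\ot v)(a'\ot v')\ot w'$ and identifying the maps $T'$, $\eta '$ produced by that proposition with the given $T$, $\eta $ through the recovery formulas $T'(w\ot x)=(1_{A\ot V}\ot w)(x\ot 1_W)$ and $\eta '(w\ot w')=(1_{A\ot V}\ot w)(1_{A\ot V}\ot w')$, which follow from the unit axioms of both pairs of maps. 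This buys a real economy: a second long Sweedler-index computation is replaced by two short unitality checks. Two minor corrections to your sketch, neither affecting its validity: the absorption property at $w=1_W$ uses the unitality of $P$, $R$, $Q$ and $\nu $ (not of $\sigma $); and the identification of the canonical maps with $T$, $\eta $ is not literally contained in the statement of Proposition \ref{defbrz}, so it must indeed be carried out as a separate (short) step, as you indicate. The heavy lifting remains, as you anticipate, the cocycle identity (\ref{brz4}) for $\theta $, which is exactly where the paper spends most of its proof.
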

\begin{proof}
We prove first that $A\ot _{S, \theta }(V\ot W)$ is a crossed product, 
i.e. we need to prove the relations (\ref{brz1})-(\ref{brz5}) with $R$ 
replaced by $S$, $\sigma $ replaced by $\theta $ etc. The relations 
(\ref{brz1}) and (\ref{brz2}) follow immediately by (\ref{unitQ}) 
and the relations (\ref{brz1}) and (\ref{brz2}) for $R$, $\sigma $ and 
$P$, $\nu $. Note that the maps $S$ and $\theta $ are defined explicitely by 
\begin{eqnarray*}
&&S(v\ot w\ot a)=(a_P)_R\ot v_R\ot w_P, \\
&&\theta (v\ot w\ot v'\ot w')=\sigma _1(v, v'_Q)\nu _1(w_Q, w')_R\ot 
\sigma _2(v, v'_Q)_R\ot \nu _2(w_Q, w'), 
\end{eqnarray*}
for all $v, v'\in V$, $w, w'\in W$ and $a\in A$. We will denote by $R=r={\mathcal R}=
\overline{R}$, $Q=q=\overline{Q}$ and $P=p$ some more copies of $R$, $Q$ and 
$P$. \\
\underline{Proof of (\ref{brz3})}:\\[2mm]
${\;\;\;}$$S\circ (id_V\ot id_W\ot \mu _A)(v\ot w\ot a\ot a')$
\begin{eqnarray*}
&=&S(v\ot w\ot aa')\\
&=&((aa')_P)_R\ot v_R\ot w_P\\
&\overset{(\ref{brz6})}{=}&(a_Pa'_p)_R\ot v_R\ot (w_P)_p\\
&\overset{(\ref{brz6})}{=}&(a_P)_R(a'_p)_r\ot (v_R)_r\ot (w_P)_p\\
&=&(\mu _A\ot id_V\ot id_W)((a_P)_R\ot (a'_p)_r\ot (v_R)_r\ot (w_P)_p)\\
&=&(\mu _A\ot id_V\ot id_W)\circ (id_A\ot S)
((a_P)_R\ot v_R\ot w_P\ot a')\\
&=&(\mu _A\ot id_V\ot id_W)\circ (id_A\ot S)\circ (S\ot id_A)(v\ot w\ot a\ot a'), 
\;\;\;q.e.d.
\end{eqnarray*}
\underline{Proof of (\ref{brz4})}:\\[2mm]
$(\mu _A\ot id_V\ot id_W)\circ (id_A\ot \theta )\circ 
(S\ot id_V\ot id_W)\circ (id_V\ot id_W\ot \theta )(v\ot w\ot v'\ot w'\ot v''\ot w'')$
\begin{eqnarray*}
&=&(\mu _A\ot id_V\ot id_W)\circ (id_A\ot \theta )\circ 
(S\ot id_V\ot id_W)(v\ot w\ot \sigma _1(v', v''_Q)\nu _1(w'_Q, w'')_R\\
&&\ot 
\sigma _2(v', v''_Q)_R\ot \nu _2(w'_Q, w''))\\
&=&(\mu _A\ot id_V\ot id_W)\circ (id_A\ot \theta )
(([\sigma _1(v', v''_Q)\nu _1(w'_Q, w'')_R]_P)_r\ot v_r\ot w_P\\
&&\ot 
\sigma _2(v', v''_Q)_R\ot \nu _2(w'_Q, w''))\\
&=&([\sigma _1(v', v''_Q)\nu _1(w'_Q, w'')_R]_P)_r
\sigma _1(v_r, (\sigma _2(v', v''_Q)_R)_q)
\nu _1((w_P)_q, \nu _2(w'_Q, w''))_{\mathcal R}\\
&&\ot 
\sigma _2(v_r, (\sigma _2(v', v''_Q)_R)_q)_{\mathcal R}\ot 
\nu _2((w_P)_q, \nu _2(w'_Q, w''))\\
&\overset{(\ref{brz6})}{=}&(\sigma _1(v', v''_Q)_P(\nu _1(w'_Q, w'')_R)_p)_r
\sigma _1(v_r, (\sigma _2(v', v''_Q)_R)_q)
\nu _1(((w_P)_p)_q, \nu _2(w'_Q, w''))_{\mathcal R}\\
&&\ot 
\sigma _2(v_r, (\sigma _2(v', v''_Q)_R)_q)_{\mathcal R}\ot 
\nu _2(((w_P)_p)_q, \nu _2(w'_Q, w''))\\
&\overset{(\ref{brz6})}{=}&(\sigma _1(v', v''_Q)_P)_{\overline{R}}
((\nu _1(w'_Q, w'')_R)_p)_r
\sigma _1((v_{\overline{R}})_r, (\sigma _2(v', v''_Q)_R)_q)
\nu _1(((w_P)_p)_q, \nu _2(w'_Q, w''))_{\mathcal R}\\
&&\ot 
\sigma _2((v_{\overline{R}})_r, (\sigma _2(v', v''_Q)_R)_q)_{\mathcal R}\ot 
\nu _2(((w_P)_p)_q, \nu _2(w'_Q, w''))\\
&\overset{(\ref{braidcoord})}{=}&(\sigma _1(v', v''_Q)_P)_{\overline{R}}
((\nu _1(w'_Q, w'')_p)_R)_r
\sigma _1((v_{\overline{R}})_r, (\sigma _2(v', v''_Q)_q)_R)
\nu _1(((w_P)_q)_p, \nu _2(w'_Q, w''))_{\mathcal R}\\
&&\ot 
\sigma _2((v_{\overline{R}})_r, (\sigma _2(v', v''_Q)_q)_R)_{\mathcal R}\ot 
\nu _2(((w_P)_q)_p, \nu _2(w'_Q, w''))\\
&\overset{(\ref{brz8})}{=}&(\sigma _1(v', v''_Q)_P)_{\overline{R}}
\sigma _1(v_{\overline{R}}, \sigma _2(v', v''_Q)_q)
(\nu _1(w'_Q, w'')_p)_R
\nu _1(((w_P)_q)_p, \nu _2(w'_Q, w''))_{\mathcal R}\\
&&\ot 
(\sigma _2(v_{\overline{R}}, \sigma _2(v', v''_Q)_q)_R)_{\mathcal R}\ot 
\nu _2(((w_P)_q)_p, \nu _2(w'_Q, w''))\\
&\overset{(\ref{PQsigma})}{=}&
\sigma _1(v'_{\overline{Q}}, (v''_Q)_q)_{\overline{R}}
\sigma _1(v_{\overline{R}}, \sigma _2(v'_{\overline{Q}}, (v''_Q)_q))
(\nu _1(w'_Q, w'')_p)_R
\nu _1(((w_{\overline{Q}})_q)_p, \nu _2(w'_Q, w''))_{\mathcal R}\\
&&\ot 
(\sigma _2(v_{\overline{R}}, \sigma _2(v'_{\overline{Q}}, (v''_Q)_q))_R)
_{\mathcal R}\ot 
\nu _2(((w_{\overline{Q}})_q)_p, \nu _2(w'_Q, w''))\\
&\overset{(\ref{brz7})}{=}&
\sigma _1(v, v'_{\overline{Q}})
\sigma _1(\sigma _2(v, v'_{\overline{Q}}), (v''_Q)_q)
(\nu _1(w'_Q, w'')_p)_R
\nu _1(((w_{\overline{Q}})_q)_p, \nu _2(w'_Q, w''))_{\mathcal R}\\
&&\ot 
(\sigma _2(\sigma _2(v, v'_{\overline{Q}}), (v''_Q)_q)_R)
_{\mathcal R}\ot 
\nu _2(((w_{\overline{Q}})_q)_p, \nu _2(w'_Q, w''))\\
&\overset{(\ref{brz6})}{=}&
\sigma _1(v, v'_{\overline{Q}})
\sigma _1(\sigma _2(v, v'_{\overline{Q}}), (v''_Q)_q)
[\nu _1(w'_Q, w'')_p
\nu _1(((w_{\overline{Q}})_q)_p, \nu _2(w'_Q, w''))]_R\\
&&\ot 
\sigma _2(\sigma _2(v, v'_{\overline{Q}}), (v''_Q)_q)_R
\ot 
\nu _2(((w_{\overline{Q}})_q)_p, \nu _2(w'_Q, w''))\\
&\overset{(\ref{brz7})}{=}&
\sigma _1(v, v'_{\overline{Q}})
\sigma _1(\sigma _2(v, v'_{\overline{Q}}), (v''_Q)_q)
[\nu _1((w_{\overline{Q}})_q, w'_Q)
\nu _1(\nu _2((w_{\overline{Q}})_q, w'_Q), w'')]_R\\
&&\ot 
\sigma _2(\sigma _2(v, v'_{\overline{Q}}), (v''_Q)_q)_R
\ot \nu _2(\nu _2((w_{\overline{Q}})_q, w'_Q), w'')\\
&\overset{(\ref{brz8})}{=}&
\sigma _1(v, v'_{\overline{Q}})
\{[\nu _1((w_{\overline{Q}})_q, w'_Q)
\nu _1(\nu _2((w_{\overline{Q}})_q, w'_Q), w'')]_R\}_r
\sigma _1(\sigma _2(v, v'_{\overline{Q}})_r, ((v''_Q)_q)_R)\\
&&\ot 
\sigma _2(\sigma _2(v, v'_{\overline{Q}})_r, ((v''_Q)_q)_R)
\ot \nu _2(\nu _2((w_{\overline{Q}})_q, w'_Q), w'')\\
&\overset{(\ref{brz6})}{=}&
\sigma _1(v, v'_{\overline{Q}})
[\nu _1((w_{\overline{Q}})_q, w'_Q)_R
\nu _1(\nu _2((w_{\overline{Q}})_q, w'_Q), w'')_{\mathcal R}]_r
\sigma _1(\sigma _2(v, v'_{\overline{Q}})_r, (((v''_Q)_q)_R)_{\mathcal R})\\
&&\ot 
\sigma _2(\sigma _2(v, v'_{\overline{Q}})_r, (((v''_Q)_q)_R)_{\mathcal R})
\ot \nu _2(\nu _2((w_{\overline{Q}})_q, w'_Q), w'')\\
&\overset{(\ref{RQniu})}{=}&
\sigma _1(v, v'_{\overline{Q}})
[\nu _1(w_{\overline{Q}}, w')
\nu _1(\nu _2(w_{\overline{Q}}, w')_Q, w'')_{\mathcal R}]_r
\sigma _1(\sigma _2(v, v'_{\overline{Q}})_r, (v''_Q)_{\mathcal R})\\
&&\ot 
\sigma _2(\sigma _2(v, v'_{\overline{Q}})_r, (v''_Q)_{\mathcal R})
\ot \nu _2(\nu _2(w_{\overline{Q}}, w')_Q, w'')\\
&\overset{(\ref{brz6})}{=}&
\sigma _1(v, v'_{\overline{Q}})
\nu _1(w_{\overline{Q}}, w')_R
(\nu _1(\nu _2(w_{\overline{Q}}, w')_Q, w'')_{\mathcal R})_r
\sigma _1((\sigma _2(v, v'_{\overline{Q}})_R)_r, (v''_Q)_{\mathcal R})\\
&&\ot 
\sigma _2((\sigma _2(v, v'_{\overline{Q}})_R)_r, (v''_Q)_{\mathcal R})
\ot \nu _2(\nu _2(w_{\overline{Q}}, w')_Q, w'')\\
&\overset{(\ref{brz8})}{=}&
\sigma _1(v, v'_{\overline{Q}})
\nu _1(w_{\overline{Q}}, w')_R
\sigma _1(\sigma _2(v, v'_{\overline{Q}})_R, v''_Q)
\nu _1(\nu _2(w_{\overline{Q}}, w')_Q, w'')_r\\
&&\ot 
\sigma _2(\sigma _2(v, v'_{\overline{Q}})_R, v''_Q)_r
\ot \nu _2(\nu _2(w_{\overline{Q}}, w')_Q, w'')\\
&=&(\mu _A\ot id_V\ot id_W)(\sigma _1(v, v'_{\overline{Q}})
\nu _1(w_{\overline{Q}}, w')_R\ot 
\sigma _1(\sigma _2(v, v'_{\overline{Q}})_R, v''_Q)
\nu _1(\nu _2(w_{\overline{Q}}, w')_Q, w'')_r\\
&&\ot 
\sigma _2(\sigma _2(v, v'_{\overline{Q}})_R, v''_Q)_r
\ot \nu _2(\nu _2(w_{\overline{Q}}, w')_Q, w''))\\
&=&(\mu _A\ot id_V\ot id_W)\circ (id_A\ot \theta )
(\sigma _1(v, v'_{\overline{Q}})
\nu _1(w_{\overline{Q}}, w')_R\\
&&\ot \sigma _2(v, v'_{\overline{Q}})_R
\ot \nu _2(w_{\overline{Q}}, w')\ot v''\ot w'')\\
&=&(\mu _A\ot id_V\ot id_W)\circ (id_A\ot \theta )\circ 
(\theta \ot id_V\ot id_W)(v\ot w\ot v'\ot w'\ot v''\ot w''), \;\;\;q.e.d.
\end{eqnarray*}
\underline{Proof of (\ref{brz5})}:\\[2mm]
${\;\;\;}$
$(\mu _A\ot id_V\ot id_W)\circ (id_A\ot \theta )\circ (S\ot id_V\ot id_W)\circ (id_V\ot 
id_W\ot S)(v\ot w\ot v'\ot w'\ot a)$
\begin{eqnarray*}
&=&(\mu _A\ot id_V\ot id_W)\circ (id_A\ot \theta )\circ (S\ot id_V\ot id_W)
(v\ot w\ot (a_P)_R\ot v'_R\ot w'_P)\\
&=&(\mu _A\ot id_V\ot id_W)\circ (id_A\ot \theta )((((a_P)_R)_p)_r\ot v_r
\ot w_p\ot v'_R\ot w'_P)\\
&=&(((a_P)_R)_p)_r\sigma _1(v_r, (v'_R)_Q)\nu _1((w_p)_Q, w'_P)_{\mathcal R}
\ot \sigma _2(v_r, (v'_R)_Q)_{\mathcal R}\ot \nu _2((w_p)_Q, w'_P)\\
&\overset{(\ref{braidcoord})}{=}&(((a_P)_p)_R)_r\sigma _1(v_r, (v'_Q)_R)
\nu _1((w_Q)_p, w'_P)_{\mathcal R}
\ot \sigma _2(v_r, (v'_Q)_R)_{\mathcal R}\ot \nu _2((w_Q)_p, w'_P)\\
&\overset{(\ref{brz8})}{=}&\sigma _1(v, v'_Q)
((a_P)_p)_R
\nu _1((w_Q)_p, w'_P)_{\mathcal R}
\ot (\sigma _2(v, v'_Q)_R)_{\mathcal R}\ot \nu _2((w_Q)_p, w'_P)\\
&\overset{(\ref{brz6})}{=}&\sigma _1(v, v'_Q)
[(a_P)_p
\nu _1((w_Q)_p, w'_P)]_R
\ot \sigma _2(v, v'_Q)_R\ot \nu _2((w_Q)_p, w'_P)\\
&\overset{(\ref{brz8})}{=}&\sigma _1(v, v'_Q)
[\nu _1(w_Q, w')a_P]_R
\ot \sigma _2(v, v'_Q)_R\ot \nu _2(w_Q, w')_P\\
&\overset{(\ref{brz6})}{=}&\sigma _1(v, v'_Q)
\nu _1(w_Q, w')_R(a_P)_r
\ot (\sigma _2(v, v'_Q)_R)_r\ot \nu _2(w_Q, w')_P\\
&=&(\mu _A\ot id_V\ot id_W)\circ (id_A\ot S)(\sigma _1(v, v'_Q)
\nu _1(w_Q, w')_R\ot \sigma _2(v, v'_Q)_R\ot \nu _2(w_Q, w')\ot a)\\
&=&(\mu _A\ot id_V\ot id_W)\circ (id_A\ot S)\circ (\theta \ot id_A)
(v\ot w\ot v'\ot w'\ot a),
\end{eqnarray*}
so $A\ot _{S, \theta }(V\ot W)$ is indeed a crossed product. With a similar 
computation one can prove that $(A\ot _{R, \sigma }V)\ot _{T, \eta }W$ is a 
crossed product, so the only thing left to prove is that the multiplications of 
$A\ot _{S, \theta }(V\ot W)$ and $(A\ot _{R, \sigma }V)\ot _{T, \eta }W$ coincide. 
A straightforward computation shows that the multiplication of 
$A\ot _{S, \theta }(V\ot W)$ is given by the formula 
\begin{eqnarray*}
&&(a\ot v\ot w)(a'\ot v'\ot w')=a(a'_P)_{\mathcal R}\sigma _1(v_{\mathcal R}, 
v'_Q)\nu _1((w_P)_Q, w')_r\ot 
\sigma _2(v_{\mathcal R}, 
v'_Q)_r\ot \nu _2((w_P)_Q, w').
\end{eqnarray*}
We compute now the multiplication of $(A\ot _{R, \sigma }V)\ot _{T, \eta }W$:\\[2mm]
${\;\;\;}$$(a\ot v\ot w)(a'\ot v'\ot w')$
\begin{eqnarray*}
&=&(a\ot v)(a'\ot v')_T\eta _1(w_T, w')\ot \eta _2(w_T, w')\\
&=&(a\ot v)(a'_P\ot v'_Q)\eta _1((w_P)_Q, w')\ot \eta _2((w_P)_Q, w')\\
&=&(a\ot v)(a'_P\ot v'_Q)(\nu _1((w_P)_Q, w')\ot 1_V)\ot \nu _2((w_P)_Q, w')\\
&=&(a\ot v)(a'_P\nu _1((w_P)_Q, w')_R\ot (v'_Q)_R)\ot \nu _2((w_P)_Q, w')\\
&=&a[a'_P\nu _1((w_P)_Q, w')_R]_r\sigma _1(v_r, (v'_Q)_R)\ot 
\sigma _2(v_r, (v'_Q)_R)\ot \nu _2((w_P)_Q, w')\\
&\overset{(\ref{brz6})}{=}&a(a'_P)_{\mathcal R}(\nu _1((w_P)_Q, w')_R)_r
\sigma _1((v_{\mathcal R})_r, (v'_Q)_R)\ot 
\sigma _2((v_{\mathcal R})_r, (v'_Q)_R)\ot \nu _2((w_P)_Q, w')\\
&\overset{(\ref{brz8})}{=}&a(a'_P)_{\mathcal R}\sigma _1(v_{\mathcal R}, 
v'_Q)\nu _1((w_P)_Q, w')_r\ot 
\sigma _2(v_{\mathcal R}, 
v'_Q)_r\ot \nu _2((w_P)_Q, w'),
\end{eqnarray*}
and we can see that the two multiplications coincide. 
\end{proof}
\begin{example}{\em 
We recall from \cite{jlpvo} what was called there an iterated twisted tensor 
product of algebras. Let $A$, $B$, $C$ be associative unital algebras, 
$R_1:B\ot A\rightarrow A\ot B$, $R_2:C\ot B\rightarrow B\ot C$, 
$R_3:C\ot A\rightarrow A\ot C$ twisting maps  satisfying the braid 
equation 
\begin{eqnarray*}
&&(id_A\ot R_2)\circ (R_3\ot id_B)\circ (id_C\ot R_1)=
(R_1\ot id_C)\circ (id_B\ot R_3)\circ (R_2\ot id_A). 
\end{eqnarray*}
Then we have an algebra structure on $A\ot B\ot C$ (called the iterated twisted 
tensor product) with unit $1_A\ot 1_B\ot 1_C$ and multiplication 
\begin{eqnarray*}
&&(a\ot b\ot c)(a'\ot b'\ot c')=a(a'_{R_3})_{R_1}\ot b_{R_1}b'_{R_2}\ot 
(c_{R_3})_{R_2}c'.
\end{eqnarray*}

We define $V=B$, $W=C$, $R=R_1$, $P=R_3$, $Q=R_2$ and the linear maps 
\begin{eqnarray*}
&&\sigma :V\ot V\rightarrow A\ot V, \;\;\; \sigma (b\ot b')=1_A\ot bb', \;\;\;
\forall \;b, b'\in V, \\
&&\nu :W\ot W\rightarrow A\ot W, \;\;\;\nu (c\ot c')=1_A\ot cc', \;\;\;
\forall \; c, c'\in W. 
\end{eqnarray*}
Then, for the crossed products $A\ot _{R, \sigma }V=A\ot _{R_1}B$, 
$A\ot _{P, \nu }W=A\ot _{R_3}C$ and the map $Q$, one can check that the 
hypotheses of Theorem \ref{principala} are satisfied and the crossed products 
$A\ot _{S, \theta }(V\ot W)\equiv (A\ot _{R, \sigma }V)\ot _{T, \eta }W$ 
(notation as in Theorem \ref{principala}) coincide with the iterated twisted 
tensor product.}
\end{example}
\begin{example}
Let $A\ot _{R, \sigma }V$ be a crossed product and $W$ an 
(associative unital) algebra. Define the linear maps 
\begin{eqnarray*}
&&P:W\ot A\rightarrow A\ot W, \;\;\; P(w\ot a)=a\ot w, \;\;\;
\forall \; a\in A, \; w\in W, \\
&&\nu :W\ot W\rightarrow A\ot W, \;\;\; \nu (w\ot w')=1_A\ot ww', \;\;\;
\forall \; w, w'\in W, 
\end{eqnarray*}
so we have the crossed product $A\ot _{P, \nu }W$ which is just the ordinary 
tensor product of algebras $A\ot W$. Define the linear map $Q:W\ot V
\rightarrow V\ot W$, $Q(w\ot v)=v\ot w$, for all $v\in V$, $w\in W$. 
Then one can easily check that the 
hypotheses of Theorem \ref{principala} are satisfied. 
\end{example}

\end{document}